\numberwithin{equation}{section}
\def\b{\beta} 
\def\d{\delta}
\newcommand{\C}{\mathbb{C}}
\newcommand{\Z}{\mathbb{Z}}
\def\Tr{\mathop{\rm Tr}}
\newcommand{\genstirlingI}[3]{%
  \genfrac{[}{]}{0pt}{#1}{#2}{#3}%
}
\newcommand{\stirlingI}[2]{\genstirlingI{}{#1}{#2}}
\newtheorem{theorem}{Theorem}[section]  %%with section numbering
\newtheorem{lemma}[theorem]{Lemma}
\theoremstyle{definition}
\newtheorem*{defn}{Definition}
\theoremstyle{remark}
\newtheorem*{rmk}{Remark}
\newtheorem{ind}[]{{\rm\it Indice}}
\title{Higher Width Moonshine}
\author[Dawsey]{Madeline Locus Dawsey*}
\address{Department of Mathematics,
University of Texas at Tyler, Tyler, TX 75799}
\email{mdawsey@uttyler.edu}
\author[Ono]{Ken Ono}
\address{Department of Mathematics, University of Virginia, Charlottesville, VA 22904}
\email{ken.ono691@virginia.edu}
\begin{document}

\thanks{*This author was previously known as Madeline Locus.}
\subjclass[2010]{11F11, 11F22, 11F37, 11F50, 20C34, 20C35}
\keywords{Moonshine, group characters, orthogonality relations}

\begin{abstract}  {\it Weak moonshine} for a finite group $G$ is the phenomenon where an infinite dimensional
 graded $G$-module $$V_G=\bigoplus_{n\gg-\infty}V_G(n)$$ has the property that its trace functions, known as McKay-Thompson series, are modular functions.  Recent work by DeHority, Gonzalez, Vafa, and Van Peski established that weak moonshine holds
 for every finite group.
 Since weak moonshine only relies on character tables, which are not isomorphism class invariants, non-isomorphic groups can have the same McKay-Thompson series.  We address this problem by extending weak moonshine to arbitrary width $s\in\mathbb{Z}^+$.  For each $1\leq r\leq s$ and each irreducible character $\chi_i$, we employ Frobenius' $r$-character extension $\chi_i^{(r)} \colon G^{(r)}\rightarrow\mathbb{C}$ to define {\it width $r$ McKay-Thompson series} for $V_G^{(r)}:=V_G\times\cdots\times V_G$ ($r$ copies) for each $r$-tuple in $G^{(r)}:=G\times\cdots\times G$ ($r$ copies).  These series are modular functions which then reflect differences between $r$-character values.   Furthermore, we establish orthogonality relations for the Frobenius $r$-characters, which dictate the compatibility of the extension of weak moonshine for $V_G$ to width $s$ weak moonshine.
\end{abstract}

\maketitle

\section{Introduction and Statement of Results}

The {\it Monstrous Moonshine Conjecture} of Conway and Norton \cite{CN}  offered a surprising relationship between the largest sporadic simple group, the monster $\mathbb{M},$ and modular functions.  The conjecture extended the observation of McKay and Thompson that the first few coefficients of 
\begin{equation}\label{J-function}
J(\tau):=j(\tau)-744=q^{-1}+196884q+21493760q^2+O\left(q^3\right),
\end{equation}
the Hauptmodul for $\mathrm{SL}_2(\mathbb{Z})$ with $q:=e^{2\pi i\tau}$ and $\tau\in\mathbb{H}$, are simple sums of the dimensions of the 194 irreducible representations of $\mathbb{M}$.  For example, if we let $\chi_1,\chi_2,\chi_3$ denote the first three (ordered by dimension) irreducible characters of $\mathbb{M}$, then we have
\begin{align*}
1&=\chi_1(1),\\
196884&=1+196883=\chi_1(1)+\chi_2(1),\\
21493760&=1+196883+21296876=\chi_1(1)+\chi_2(1)+\chi_3(1).
\end{align*}
Thompson \cite{Thompson1,Thompson2} made the conjecture that there is a graded, infinite-dimensional $\mathbb{M}$-module $$V^\natural=\bigoplus_{n\geq-1} V^\natural(n)$$ whose graded dimensions $\dim V^\natural(n)$ are the Fourier coefficients of $J(\tau)$.  Conway and Norton \cite{CN} then formulated the overarching {\it Monstrous Moonshine Conjecture}, the assertion that, for each $g\in\mathbb{M}$, there is a genus zero subgroup $\Gamma_g\subseteq\mathrm{SL}_2(\mathbb{R})$ such that the graded trace function 
$$
T_g(\tau):=\sum_{n\geq-1}\mathrm{Tr}\left(g\vert V^\natural(n)\right)q^n,
$$
called the \emph{McKay-Thompson series}, is the Hauptmodul for $\Gamma_g$.  
Borcherds \cite{Borcherds} famously proved this conjecture in 1992.

In the aftermath of Borcherds' work, further examples of ``moonshine'' have been obtained. 
Monstrous moonshine was extended, giving rise to Norton's Generalized Moonshine conjecture 
and Ryba's Modular Moonshine conjecture (see \cite{Carnahan}).
The McKay-Thompson series have even been mock modular forms, illustrating that there are many more types of moonshine than mathematicians and physicists initially thought.  Duncan, Griffin, and the second author gave a survey  \cite{DGO1} of the advancements in the theory of moonshine and its applications to physics as of 2015, including their proof \cite{DGO2} of the \emph{Umbral Moonshine Conjecture} of Cheng, Duncan and Harvey
\cite{CDH}. Along these lines, there has been a flurry of recent work (for example, see \cite{DMO,GM}).

In 2017, DeHority, Gonzalez, Vafa, and Van Peski \cite{REU} examined the question of the extent to which dimensions of irreducible representations of finite groups and Fourier coefficients of modular functions are related.  They proved (see Theorem 1.1 of \cite{REU}) that the seemingly rare phenomenon of moonshine  holds for every single finite group if we relax certain requirements.  Namely, for every finite group $G$ there is an infinite-dimensional graded $G$-module
\begin{equation}\label{module}
V_G=\bigoplus_{n\in\{-d\}\cup\mathbb{Z}^+}V_G(n),
\end{equation}
for sufficiently large $d>0$, such that the McKay-Thompson series for each $g\in G$ is a weakly holomorphic\footnote{A weakly holomorphic modular function is allowed to have poles at cusps.} modular function.  We refer to this generalization as \emph{weak moonshine}.

\begin{rmk}
Monstrous moonshine, and other
strong examples of moonshine, are equipped with a rich algebra structure, typically as  vertex operator algebras.
Recent work by Evans and Gannon \cite{EvansGannon} offers such moonshine for any finite solvable group $G$ and cohomology class in $H^4(G,\Z)$.
\end{rmk}

Unfortunately, two non-isomorphic groups can have the same moonshine.  This arises from the fact that weak moonshine depends only on character tables, which do not uniquely determine a group (for example, consider the dihedral group $D_4$ and the quaternion group $Q_8$). This problem is acute for {\it Brauer pairs}, pairs of non-isomorphic finite groups which admit an isomorphism of character tables that preserves power maps on conjugacy classes. A classic theorem of Dade \cite{Dade} proves that there are
infinitely many Brauer pairs.\footnote{Dade's theorem offers infinitely many Brauer pairs among $p$-groups.}
Therefore, it is natural to ask for extensions of weak moonshine that distinguish such groups.

To answer this question, we make use of the generalized, ``higher width" group characters defined by Frobenius in \cite{Frob}.
Let $G$ be a finite group, and let $\rho_1,\dots,\rho_t$ be the irreducible representations of $G$; i.e., each $\rho_i$ is a group homomorphism $\rho_i \colon G\rightarrow\mathrm{GL}(V_i)$ for some $\mathbb{C}$-vector space $V_i$.  Let $\chi_1,\dots,\chi_t$ be the irreducible characters of $G$, which are the class functions $\chi_i \colon G\rightarrow\mathbb{C}$ defined by $\chi_i(g):=\mathrm{Tr}\left(\rho_i(g)\right)$ for all $g\in G$.  

We now turn to the Frobenius $r$-characters.
For $r\in\mathbb{Z}^+,$ we let
$G^{(r)}:=G\times\cdots\times G\hspace{.2cm}(r\text{ copies}).$
 If $\chi$ is an irreducible character, then its $r$-character generalizations are defined by letting $\chi^{(1)}(g):=\chi(g)$, $\chi^{(2)}\left(g_1,g_2\right):=\chi\left(g_1\right)\chi\left(g_2\right)-\chi\left(g_1g_2\right),$ and for $r\geq3$ by the recursive relation
\begin{align}\label{defn}
\chi^{(r)}\big(g_1,&\dots,g_r\big):=\chi\left(g_1\right)\chi^{(r-1)}\left(g_2,\dots,g_r\right)\\
&-\chi^{(r-1)}\left(g_1g_2,\dots,g_r\right)-\chi^{(r-1)}\left(g_2,g_1g_3,\dots,g_r\right)-\cdots-\chi^{(r-1)}\left(g_2,\dots,g_1g_r\right).\nonumber
\end{align}

For many years, the problem of determining the role of the Frobenius $r$-characters in group theory remained open.  Namely, to what extent do Frobenius $r$-characters uniquely determine a group up to isomorphism?  Hoehnke and Johnson \cite{HJ1,HJ2} gave the very satisfying answer that a group is uniquely determined by its 1, 2, and 3-characters.  Therefore, the goal here is to construct an extension of weak moonshine that also makes use of the 2 and 3-characters. It turns out to be quite simple.

We proceed with this goal in mind.  Suppose that $G$ satisfies weak moonshine
with $V_G$ as in (\ref{module}).
For $1\leq i \leq t,$ we let  $m_i(n)$ denote the multiplicity of the representation space for $\rho_i$ in $V_G(n)$.
For $g\in G$, weak moonshine asserts that
 the McKay-Thompson series
 $$
 T_g(\tau):=\sum_{n\gg -\infty} \Tr(g| V_G(n))q^n=\sum_{n\gg -\infty} \sum_{1\leq i \leq t} m_i(n)\chi_i(g)q^n
 $$
 are modular functions.
 
To extend this moonshine, we assemble {\it width $r$ McKay-Thompson series} using $r$-characters.
 If $1\leq r\leq s$ and $\underline{g}:=\left(g_1,\dots,g_r\right)\in G^{(r)}$, then we define the \emph{$r$-Frobenius of $\underline{g}$ on $V_G^{(r)}(n):=V_G(n)\times\cdots\times V_G(n)$} ($r$ copies) by
\begin{equation}\label{r-Frob}
\mathrm{Frob}_r\left(\underline{g};n\right):=\sum_{1\leq i\leq t}m_i(n)\chi_i^{(r)}\left(\underline{g}\right).
\end{equation}
 For each $1\leq r\leq s$ and each $\underline{g}\in G^{(r)}$, we define the width $r$ \emph{McKay-Thompson series}
\begin{equation}\label{T}
T\left(r,\underline{g};\tau\right):=\sum_{n\gg-\infty}\mathrm{Frob}_r\left(\underline{g};n\right)q^n.
\end{equation}

\begin{defn}
We say that $G$ has \textbf{width $s\geq1$ weak moonshine} if for each $1\leq r\leq s$ and each $\underline{g}\in G^{(r)}$ we have that
 $T\left(r,\underline{g};\tau\right)$ is a weakly holomorphic modular function.
\end{defn}

\begin{rmk}
If $r=1$, then
we have  $\mathrm{Frob}_1(g;n)=\mathrm{Tr}\left(g\vert V_G(n)\right).$  In particular, if $g=e$ is the identity, then the graded dimensions $\mathrm{dim}V_G(n)$ are the coefficients of $T(1,e;q)=T_e(\tau)$.
\end{rmk}

Weak moonshine is {\it complete} if for each $1\leq i\leq t$ there is a nonzero $m_i(n)$.   Thanks to standard facts about modular functions, complete weak moonshine has the property, for each $i$, that $m_i(n)>0$ for infinitely many $n$. The deepest examples of moonshine are asymptotically regular, a feature of monstrous moonshine
which was confirmed in  2015 \cite{DGO1}. A moonshine module $V_G$ is {\it asymptotically regular} if for each $1\leq i\leq t$ we have that
\begin{equation}
\lim_{n\rightarrow \infty} \frac{m_i(n)}{\sum_{j=1}^{t} m_j(n)} =\frac{\dim \chi_i}{\sum_{j=1}^{t} \dim \chi_j}.
\end{equation}

We obtain the following theorem regarding the existence of width $s\geq1$ weak moonshine.

\begin{theorem}\label{moonshine}
If $G$ is a finite group and $s\in \Z^{+}$, then weak moonshine for $G$ extends to width $s$ weak moonshine.  Moreover, 
 $G$
admits asymptotically regular width $s$ weak moonshine for every $s\in \Z^{+}$.
\end{theorem}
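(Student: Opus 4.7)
The plan is to reduce every higher-width McKay-Thompson series to a finite $\mathbb{C}$-linear combination of the ordinary width-one series $T_g(\tau)$ supplied by weak moonshine for $G$. Fix $r$ and $\underline{g}\in G^{(r)}$. Because $\chi_i^{(r)}(\underline{g})$ is a fixed complex scalar, combining \eqref{r-Frob} and \eqref{T} gives
$$T(r,\underline{g};\tau) \;=\; \sum_{i=1}^{t}\chi_i^{(r)}(\underline{g})\cdot f_i(\tau), \qquad f_i(\tau):=\sum_{n\gg-\infty}m_i(n)q^n,$$
so the entire $\tau$-dependence is carried by the multiplicity generating series $f_i(\tau)$, and the whole problem reduces to showing that each $f_i(\tau)$ is itself a weakly holomorphic modular function.

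For this I would invert the character table via Schur orthogonality. Using $\Tr(g\mid V_G(n))=\sum_i m_i(n)\chi_i(g)$ together with $\sum_{g\in G}\overline{\chi_i(g)}\chi_j(g)=|G|\delta_{ij}$, one obtains
$$m_i(n)=\frac{1}{|G|}\sum_{g\in G}\overline{\chi_i(g)}\,\Tr(g\mid V_G(n)), \qquad\text{hence}\qquad f_i(\tau)=\frac{1}{|G|}\sum_{g\in G}\overline{\chi_i(g)}\,T_g(\tau).$$
Each $T_g(\tau)$ is weakly holomorphic modular on some congruence subgroup of $\mathrm{SL}_2(\mathbb{Z})$ by the given width-one weak moonshine, and a finite intersection of congruence subgroups is again a congruence subgroup of $\mathrm{SL}_2(\mathbb{Z})$. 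Hence $f_i(\tau)$ is weakly holomorphic modular, and substituting back shows $T(r,\underline{g};\tau)$ is weakly holomorphic modular for every $r\leq s$.

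For the asymptotic regularity statement, I would exhibit a single module $V_G$ that is simultaneously asymptotically regular and satisfies width-one weak moonshine; the argument above then automatically upgrades it to asymptotically regular width-$s$ moonshine for every $s$, because the multiplicities $m_i(n)$ do not depend on $r$. Such a $V_G$ can be produced directly from any weakly holomorphic modular function $H(\tau)=\sum_n h(n)q^n$ with $h(n)>0$ for all sufficiently large $n$: after clearing denominators, set $m_i(n):=(\dim\chi_i)\cdot h(n)$. Asymptotic regularity is then built in, and the column-orthogonality identity $\sum_i(\dim\chi_i)\chi_i(g)=|G|\,\delta_{g,e}$ forces each $T_g(\tau)$ to be a scalar multiple of $H(\tau)$, hence modular.

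The only substantive step is the character-table inversion producing the $f_i(\tau)$; everything else is bookkeeping, and the fact that a finite intersection of congruence subgroups is a congruence subgroup is classical. In particular, the higher-width characters $\chi_i^{(r)}$ contribute no modularity content and appear only as fixed scalar weights, so no identity arising from the recursive definition \eqref{defn} has to be invoked. This is why the extension to arbitrary width $s$ is essentially automatic once width-one weak moonshine is in hand.
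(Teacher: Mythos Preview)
Your argument for extending weak moonshine to width $s$ is exactly the paper's: write $T(r,\underline{g};\tau)=\sum_i\chi_i^{(r)}(\underline{g})\,\mathcal{M}_i(\tau)$ with $\mathcal{M}_i(\tau)=|G|^{-1}\sum_{g\in G}\overline{\chi_i(g)}\,T_g(\tau)$ via Schur orthogonality, and conclude modularity from that of the width-one series. The paper presents it in the same order and with the same inputs.

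The one genuine difference is how asymptotic regularity is obtained. The paper simply invokes \cite{REU}, Section~5, for the existence of an asymptotically regular $V_G$. You instead build one by hand: take any weakly holomorphic modular function $H=\sum h(n)q^n$ with eventually positive integral coefficients (e.g.\ $J(\tau)$), set $m_i(n)=(\dim\chi_i)\,h(n)$, and use column orthogonality to get $T_g(\tau)=|G|\,H(\tau)\cdot\delta_{g,e}$. This is correct and more self-contained than the paper's citation; it even achieves the limiting ratios exactly rather than asymptotically. The trade-off is that your module is degenerate in the sense that all nontrivial McKay--Thompson series vanish identically, whereas the construction in \cite{REU} (and the remark following Theorem~\ref{moonshine}) produces series on $\Gamma_0(\mathrm{ord}(g))$ with some arithmetic content. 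For the theorem as stated, however, your construction suffices.
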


\begin{rmk}
We note that $\chi_i^{(r)}$ vanishes if $r>\dim(\chi_i)$. Therefore, these moonshine modules
are trivial for sufficiently large $s$.
\end{rmk}

\begin{rmk}
We can replace weakly holomorphic modular functions with  weakly holomorphic modular forms of arbitrary weight $k$, in which case we refer to this moonshine as \emph{width $s$ and weight $k$ weak moonshine}.  Furthermore, we can choose each $T(1,g;\tau)$ to be on the congruence subgroup $\Gamma_0(\mathrm{ord}(g))$, where $\mathrm{ord}(g)$ is the order of $g$ in $G$.  
%This requirement is natural for Brauer pairs for these groups have common group element orders among their conjugacy classes.
\end{rmk}

\begin{rmk}
It would be very interesting to refine the notion of higher width moonshine that allows one to reconstruct character tables from the Fourier expansions of McKay-Thompson series. The results described here
are insufficient in this regard.
\end{rmk}

It is important to understand the algebraic compatibility of the higher width McKay-Thompson series $\big($i.e. $T\left(r,\underline{g};\tau\right)$ with $r\geq2\big)$ under these extensions.  In particular, these series should satisfy relations which reveal the structure of the seed module $V_G$.  In this direction, we turn to the problem of computing the multiplicities $m_i(n)$ of the representation spaces for the irreducible representations $\rho_i$ in the $n$th graded components $V_G(n)$.  The following theorem illustrates the compatibility of weak moonshine for $V_G$ when extended to width $s$.  In short, the multiplicity generating functions are compatible with the McKay-Thompson series $T\left(r,\underline{g};\tau\right)$.

\begin{theorem}\label{multiplicities}
Suppose that width $s$ weak moonshine holds for a finite group $G$ with irreducible characters $\chi_1,\dots,\chi_t$ and McKay-Thompson series $$\left\{T\left(r,\underline{g};\tau\right) : 1\leq r\leq s\text{ and }\underline{g}\in G^{(r)}\right\}.$$
If $1\leq r\leq s$ and  $\mathrm{dim}\chi_i\geq r$, then the multiplicity generating function for $\rho_i$ in $V_G$ satisfies 
$$\mathcal{M}_i(\tau):=\sum_{n\gg-\infty} m_i(n)q^n=\frac{\left(\mathrm{dim}\chi_i\right)^{r-1}}{r!|G|^r\left(\mathrm{dim}\chi_i-1\right)\cdots\left(\mathrm{dim}\chi_i-(r-1)\right)}\sum_{\underline{g}\in G^{(r)}}\overline{\chi_i^{(r)}\left(\underline{g}\right)}T\left(r,\underline{g};\tau\right).$$
\end{theorem}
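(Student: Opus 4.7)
The plan is to reduce Theorem~\ref{multiplicities} to an orthogonality identity for the Frobenius $r$-characters and then prove that identity by induction on $r$. Substituting the definition (\ref{r-Frob}) of $\mathrm{Frob}_r$ into (\ref{T}) and interchanging the order of summation immediately yields
$$T(r,\underline{g};\tau) = \sum_{j=1}^{t}\chi_j^{(r)}(\underline{g})\,\mathcal{M}_j(\tau).$$
Multiplying by $\overline{\chi_i^{(r)}(\underline{g})}$ and summing over $\underline{g}\in G^{(r)}$, the theorem is equivalent to the $r$-character orthogonality relation
$$\sum_{\underline{g}\in G^{(r)}}\overline{\chi_i^{(r)}(\underline{g})}\,\chi_j^{(r)}(\underline{g})\;=\;\delta_{ij}\cdot\frac{r!\,|G|^{r}\,(\dim\chi_i-1)(\dim\chi_i-2)\cdots(\dim\chi_i-(r-1))}{(\dim\chi_i)^{r-1}}.$$

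I would prove this relation by induction on $r$. The base case $r=1$ is the classical first orthogonality of irreducible characters. For the inductive step, apply the recursion (\ref{defn}) to each of $\chi_i^{(r)}(\underline{g})$ and $\chi_j^{(r)}(\underline{g})$; this produces an $r\times r$ array of cross terms in which the first argument $g_1$ appears either in isolation (as $\chi(g_1)$ or $\overline{\chi(g_1)}$) or inside one of the $r-1$ substitution slots of the form $g_1g_k$. After fixing $g_2,\dots,g_r$, the inner sum over $g_1$ can be evaluated using the Frobenius convolution identity
$$\sum_{g\in G}\chi_k(xg)\,\chi_k(g^{-1}y)\;=\;\frac{|G|}{\dim\chi_k}\,\chi_k(xy)\qquad(\chi_k\text{ irreducible}),$$
together with $\sum_g \chi(g)\overline{\chi(g)} = |G|$. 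After a change of variables $h=g_1g_k$ in the exchange terms, every remaining double sum collapses to a scalar multiple of $\sum_{\underline{h}\in G^{(r-1)}}\overline{\chi_i^{(r-1)}(\underline{h})}\,\chi_j^{(r-1)}(\underline{h})$, to which the inductive hypothesis applies.

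The main obstacle is the combinatorial bookkeeping: tracking the $r^{2}$ cross terms so that their contributions assemble into precisely the polynomial $(\dim\chi_i-1)\cdots(\dim\chi_i-(r-1))/(\dim\chi_i)^{r-1}$ and so that the leading combinatorial factor $(r-1)!$ coming from the inductive hypothesis is promoted to $r!$. The pattern is already transparent at $r=2$: the four cross terms combine to give $2|G|^{2}\delta_{ij}(\dim\chi_i-1)/\dim\chi_i$, in agreement with the claim. A cleaner organization that avoids case-by-case enumeration may be to introduce, for each $i$, an averaging projector onto the $\chi_i^{(r)}$-isotypic component of class-type functions on $G^{(r)}$ and verify its idempotency using the convolution identity above; the normalization constant of the projector is then read off from $\pi_i(\chi_i^{(r)})=\chi_i^{(r)}$. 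Once the orthogonality identity is established, the hypothesis $\dim\chi_i\geq r$ guarantees that the denominator in the statement is nonzero, and the formula for $\mathcal{M}_i(\tau)$ follows by dividing through.
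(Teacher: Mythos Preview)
Your reduction of Theorem~\ref{multiplicities} to the orthogonality relation
\[
\sum_{\underline{g}\in G^{(r)}}\overline{\chi_i^{(r)}(\underline{g})}\,\chi_j^{(r)}(\underline{g})
=\delta_{ij}\cdot\frac{r!\,|G|^{r}\,(\dim\chi_i-1)\cdots(\dim\chi_i-(r-1))}{(\dim\chi_i)^{r-1}}
\]
is exactly what the paper does; this is its Theorem~\ref{general}, and once it is available the proof of Theorem~\ref{multiplicities} is the three-line computation in Section~\ref{moonshineproofs}.

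Where you diverge from the paper is in the proof of the orthogonality itself. The paper does \emph{not} argue by induction on $r$. Instead it expands $\chi^{(r)}$ via the symmetric-group formula
\[
\chi^{(r)}(g_1,\dots,g_r)=\sum_{\sigma\in S_r}\mathrm{sgn}(\sigma)\prod_{\text{cycles }c\text{ of }\sigma}\chi\bigl(\textstyle\prod_{a\in c}g_a\bigr),
\]
multiplies two such expansions, and then eliminates $g_r,g_{r-1},\dots$ one at a time using Lemma~\ref{aux}. The outcome is the closed form $\sum_{\sigma,\tau\in S_r}\mathrm{sgn}(\sigma\tau^{-1})\,|G|^r(\dim\chi)^{n(\sigma\tau^{-1})-r}$, which is then identified with the falling factorial through the Stirling-number identity $\sum_{\rho\in S_r}\mathrm{sgn}(\rho)\,x^{n(\rho)}=x(x-1)\cdots(x-r+1)$. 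No inductive hypothesis on $\Omega_{r-1}$ is invoked.

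Your inductive scheme is a legitimate alternative, and the combinatorics you anticipate is real: writing $\Omega_r=\sum_{j,k}C_{j,k}$ for the $r^2$ cross terms from the recursion (\ref{defn}), one does find $C_{1,1}=C_{j,j}=|G|\,\Omega_{r-1}$ and $C_{1,k}=C_{j,1}=-\tfrac{|G|}{\dim\chi}\,\Omega_{r-1}$, exactly as your $r=2$ check suggests. The gap is in the mixed terms $C_{j,k}$ with $j\neq k$ and $j,k\ge 2$. Your claim that ``a change of variables $h=g_1g_k$'' collapses these to a scalar multiple of $\Omega_{r-1}$ is not correct as stated: after substituting $h=g_1g_j$, the first factor becomes $\chi^{(r-1)}(\dots,h,\dots,g_k,\dots)$, but the conjugate factor becomes $\overline{\chi^{(r-1)}(\dots,g_1^{-1}h,\dots,g_1g_k,\dots)}$, which still carries $g_1$ in \emph{two} slots. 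A single application of the convolution identity does not remove it. To evaluate $\sum_{g_1}$ here one must open up $\chi^{(r-1)}$ (e.g.\ via its $S_{r-1}$ expansion) and treat separately the cases where positions $j$ and $k$ lie in the same cycle or in different cycles, using both parts of Lemma~\ref{aux}. One can check by hand at $r=3$ that indeed $C_{j,k}=-\tfrac{|G|}{\dim\chi}\,\Omega_{r-1}$, and summing then gives $\Omega_r=r\,|G|\,\tfrac{\dim\chi-(r-1)}{\dim\chi}\,\Omega_{r-1}$ as required; but proving this identity for general $r$ is essentially the same amount of work as the paper's direct $S_r$ argument, just organized recursively. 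Your proposal, as written, asserts this step rather than proving it.
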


Theorem \ref{multiplicities} is obtained from new general results on the orthogonality of the Frobenius $r$-characters.  In Section \ref{r-chars} we develop these relations, completing work initiated by Frobenius, Hoehnke, and Johnson \cite{Frob,HJ1,HJ2,J}. These results are of independent interest in character theory.

In Section \ref{moonshineproofs} we prove Theorem \ref{moonshine} and Theorem \ref{multiplicities}.  In the last section we examine a coincidental weak moonshine for $D_4$ and $Q_8$, and we illustrate how its width 2 extension distinguishes these groups.

\section*{Acknowledgements}
The authors thank John Duncan, Michael Mertens, Larry Rolen, Matt Tyler and the referees for their helpful discussions.  The second author thanks the National Science Foundation and the Asa Griggs Candler Fund.

\section{Orthogonality of Frobenius $r$-characters}\label{r-chars}

Throughout, let $G$ be a finite group, and let $\rho_1,\dots,\rho_t$ and $\chi_1,\dots,\chi_t$ be as above.  Classical work of Schur (for example, see \cite{CR}) asserts that if $\chi$ is nontrivial, then
\begin{align}
\label{1}\sum_{g\in G}\chi(g)&=0,
\end{align}
and offers the following orthogonality relations:
\begin{align}
\label{2}\sum_{g\in G}\chi_i(g)\overline{\chi_j(g)}&=|G|\d_{ij},
\end{align}
where $\d_{ij}$ is the usual Kronecker delta function.

It is a natural problem to determine the orthogonality relations of the Frobenius $r$-characters for $r>1$.  Frobenius, Hoehnke, and Johnson \cite{Frob,HJ1,HJ2,J} obtained some parts of this theory.  Here we obtain the remaining relations, results which are of independent interest.
Generalizing (\ref{2}), we obtain the full orthogonality relations.

\begin{theorem}\label{general}
If $G$ is a finite group with irreducible characters $\chi_1,\dots,\chi_t$ and $1\leq i,j\leq t$, then for any $r\geq1$ we have that
\begin{align*}
\sum_{\underline{g}\in G^{(r)}}\chi^{(r)}_i\left(\underline{g}\right)\overline{\chi^{(r)}_j\left(\underline{g}\right)}=\frac{r!|G|^r\d_{ij}}{\left(\mathrm{dim}\chi_i\right)^{r-1}}\left(\mathrm{dim}\chi_i-1\right)\cdots\left(\mathrm{dim}\chi_i-(r-1)\right).
\end{align*}
\end{theorem}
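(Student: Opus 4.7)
\emph{Proof plan.} The strategy is to reformulate the sum as a single trace on tensor products and then apply Schur's orthogonality $r$ times in parallel. The first step is the antisymmetrizer formula
$$\chi_i^{(r)}(g_1,\ldots,g_r)\;=\;\mathrm{Tr}_{V_i^{\otimes r}}\!\bigl(A_r\cdot \rho_i(g_1)\otimes\cdots\otimes\rho_i(g_r)\bigr),\qquad A_r:=\sum_{\sigma\in S_r}\mathrm{sgn}(\sigma)\,P_\sigma,$$
where $P_\sigma$ permutes the $r$ tensor factors. Using the standard identity $\mathrm{Tr}_{V^{\otimes r}}(P_\sigma\cdot \rho(g_1)\otimes\cdots\otimes \rho(g_r))=\prod_{c\in C(\sigma)}\chi(g_c)$, with $g_c$ the product of the $g_k$'s around a cycle $c$ (well defined up to cyclic rotation by cyclicity of trace), this is equivalent to the combinatorial identity $\chi^{(r)}(\underline g)=\sum_\sigma \mathrm{sgn}(\sigma)\prod_{c\in C(\sigma)}\chi(g_c)$. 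I would verify it by induction on $r$ from (\ref{defn}): the leading term $\chi(g_1)\chi^{(r-1)}(g_2,\ldots,g_r)$ accounts for all $\sigma\in S_r$ fixing $1$, and each correction $-\chi^{(r-1)}(g_2,\ldots,g_1g_k,\ldots,g_r)$ is matched bijectively with those $\sigma$ obtained from an element of $S_{r-1}$ by inserting $1$ immediately before $k$ in its cycle through $k$; this insertion flips the sign (the cycle gains one element) and produces exactly the required cycle-product factor.

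Since $\overline{\chi_j(g)}=\chi_j(g^{-1})$ and $\chi_j$ is a trace, a short cyclicity argument gives $\overline{\chi_j^{(r)}(\underline g)}=\chi_j^{(r)}(\underline g^{-1})$. Hence
$$\sum_{\underline g\in G^{(r)}}\chi_i^{(r)}(\underline g)\,\overline{\chi_j^{(r)}(\underline g)}\;=\;\mathrm{Tr}_{V_i^{\otimes r}\otimes V_j^{\otimes r}}\!\Bigl((A_r\otimes A_r)\cdot\!\!\sum_{\underline g\in G^{(r)}}\!\!\Phi_i(\underline g)\otimes\Phi_j(\underline g^{-1})\Bigr),$$
where $\Phi(\underline g):=\rho(g_1)\otimes\cdots\otimes\rho(g_r)$. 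After reorganizing $V_i^{\otimes r}\otimes V_j^{\otimes r}\cong\bigotimes_{k=1}^{r}(V_i\otimes V_j)$, the inner sum factors into $r$ identical copies of
$$\sum_{g\in G}\rho_i(g)\otimes\rho_j(g^{-1})\;=\;\delta_{ij}\,\frac{|G|}{\dim V_i}\,T,$$
where $T\colon V_i\otimes V_i\to V_i\otimes V_i$ is the flip $v\otimes w\mapsto w\otimes v$; this is immediate from Schur's relation $\sum_g \rho_i(g)_{ac}\overline{\rho_j(g)_{db}}=\frac{|G|}{\dim V_i}\delta_{ij}\delta_{ad}\delta_{bc}$. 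The whole trace therefore vanishes unless $i=j$, in which case, writing $V=V_i$ and reorganizing back, one finds
$$\sum_{\underline g}\chi_i^{(r)}(\underline g)\,\overline{\chi_i^{(r)}(\underline g)}\;=\;\frac{|G|^r}{(\dim V)^r}\,\mathrm{Tr}_{V^{\otimes r}\otimes V^{\otimes r}}\!\bigl((A_r\otimes A_r)\,\tilde T\bigr),$$
with $\tilde T$ the swap of the two halves of $V^{\otimes r}\otimes V^{\otimes r}$.

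Finally, for operators $B_1,B_2$ on any space $W$, a one-line basis check gives $\mathrm{Tr}_{W\otimes W}((B_1\otimes B_2)\tilde T)=\mathrm{Tr}_W(B_1 B_2)$, so the last trace equals $\mathrm{Tr}(A_r^{2})$. Counting pairs $(\sigma,\tau)$ with a given product $\sigma\tau$ yields $A_r^{2}=r!\,A_r$, while $A_r$ itself is $r!$ times the projection onto $\Lambda^{r}V$, so $\mathrm{Tr}(A_r^{2})=r!\,\mathrm{Tr}(A_r)=(r!)^{2}\dim\Lambda^{r}V=r!\cdot\dim V(\dim V-1)\cdots(\dim V-r+1)$. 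Plugging in gives precisely the identity of Theorem \ref{general}; in particular the right-hand side vanishes automatically as soon as $r>\dim\chi_i$, consistent with $\chi_i^{(r)}\equiv 0$ in that range.

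The main obstacle is the opening step: carefully verifying the antisymmetrizer formula from Frobenius's recursion (\ref{defn}). The combinatorial bookkeeping of the inductive step—pairing the $r-1$ correction terms with insertions of the index $1$ into cycles, and checking that both the sign and the cycle-product come out right—is the only genuinely delicate piece of the argument; once it is in place, the remainder of the proof is a clean factorization of Schur's relations and a short exterior-algebra trace computation.
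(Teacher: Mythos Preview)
Your argument is correct and takes a genuinely different route from the paper.  Both approaches begin with the same combinatorial identity
\[
\chi^{(r)}(g_1,\ldots,g_r)=\sum_{\sigma\in S_r}\mathrm{sgn}(\sigma)\prod_{c\in C(\sigma)}\chi(g_c),
\]
which the paper states as a consequence of iterating (\ref{defn}) and which you justify by the insertion-of-$1$ induction.  From that point on the two proofs diverge.  The paper expands the double sum over $(\sigma,\tau)\in S_r\times S_r$, then eliminates $g_r,g_{r-1},\ldots,g_1$ one at a time using the auxiliary Schur-type identities of Lemma~\ref{aux} (in particular part~\textit{(2)}, proved there by a direct matrix-coefficient computation); this produces the intermediate formula~(\ref{inter}) indexed by the cycle type of $\sigma\tau^{-1}$, and the proof is completed by matching the coefficients of $(\dim\chi)^{r-i}$ on both sides.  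The case $i\neq j$ is not reproved but cited from Johnson.  By contrast, you recognise the cycle-product formula as $\mathrm{Tr}_{V_i^{\otimes r}}(A_r\,\Phi_i(\underline g))$, apply Schur's matrix-coefficient orthogonality in one stroke across all $r$ tensor slots to obtain the flip $\tilde T$, and reduce the whole computation to $\mathrm{Tr}_{V^{\otimes r}}(A_r^{2})=(r!)^{2}\dim\Lambda^{r}V$.  Your route is more conceptual and self-contained: it handles $i\neq j$ uniformly, bypasses the iterative elimination and the closing coefficient-matching combinatorics, and makes transparent why the answer is the falling factorial $\dim V(\dim V-1)\cdots(\dim V-r+1)$ (it is literally $r!\dim\Lambda^{r}V$).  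The paper's approach, on the other hand, stays closer to scalar character identities and develops Lemma~\ref{aux} along the way, which it uses elsewhere; it is more elementary in the sense of avoiding tensor-product machinery, at the cost of a longer endgame.
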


\begin{rmk}
If $r=1$, then we consider the product $\left(\mathrm{dim}\chi_i-1\right)\cdots\left(\mathrm{dim}\chi_i-(r-1)\right)$ to be empty, and we set the empty product to be 1.  This gives the usual 1-character relation (\ref{2}).
\end{rmk}

\begin{rmk}
Theorem \ref{general} when $i\neq j$ was obtained earlier by Johnson (see Theorem 2.1 of \cite{J}).
\end{rmk}

\subsection{Some lemmas}\label{lemmas}

We require preliminary lemmas about characters to prove Theorem \ref{general}.

\begin{lemma}\label{aux}
If $\chi_i$ is an irreducible character of $G$, and  $h_1,h_2\in G$, then the following are true:
\begin{enumerate}
\item We have that $$\sum_{g\in G}\chi_i\left(gh_1g^{-1}h_2^{-1}\right)=\frac{\chi_i\left(h_1\right)\overline{\chi_i\left(h_2\right)}|G|}{\mathrm{dim}\chi_i}.$$
\item If $\chi_j$ is an irreducible character of $G$, then we have that $$\sum_{g\in G}\chi_i\left(h_1g\right)\overline{\chi_j\left(gh_2\right)}=\frac{\chi_i\left(h_1h_2^{-1}\right)|G|\d_{ij}}{\mathrm{dim}\chi_i}.$$
\end{enumerate}
\end{lemma}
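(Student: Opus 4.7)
Both parts of the lemma are classical consequences of Schur's lemma applied to carefully chosen operators on the irreducible representation $\rho_i\colon G \to \GL(V_i)$. Throughout, I will assume $\rho_i$ is unitary (which can always be arranged since $G$ is finite), so that $\overline{\chi_i(g)} = \chi_i(g^{-1})$ for every $g \in G$.

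For part (1), the plan is to introduce the endomorphism
\[
T := \sum_{g\in G} \rho_i\!\left(g h_1 g^{-1}\right) \in \mathrm{End}(V_i)
\]
and to check by the substitution $g \mapsto xg$ that $\rho_i(x)\, T = T\, \rho_i(x)$ for every $x \in G$. Schur's lemma then forces $T = \lambda\, I$ for some scalar $\lambda \in \C$, and taking the trace yields $\lambda \cdot \dim \chi_i = \sum_{g} \chi_i(g h_1 g^{-1}) = |G|\,\chi_i(h_1)$, since $\chi_i$ is a class function. The claimed identity then drops out of
\[
\sum_{g\in G} \chi_i\!\left(g h_1 g^{-1} h_2^{-1}\right) = \Tr\!\left(T\, \rho_i(h_2^{-1})\right) = \lambda\, \chi_i(h_2^{-1}) = \lambda\, \overline{\chi_i(h_2)}.
\]

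For part (2), the first step is to apply cyclicity of trace to rewrite $\chi_j(g h_2) = \chi_j(h_2 g)$, and then substitute $u := h_2 g$. With $k := h_1 h_2^{-1}$, this reduces the claim to proving
\[
\sum_{u\in G} \chi_i(k u)\,\overline{\chi_j(u)} = \frac{|G|\,\chi_i(k)\, \delta_{ij}}{\dim \chi_i}.
\]
To handle this, I would introduce $S := \sum_{v\in G} \overline{\chi_j(v)}\, \rho_i(v) \in \mathrm{End}(V_i)$. A reparametrization $v \mapsto x v x^{-1}$, combined with the class-function property of $\chi_j$, shows that $S$ commutes with every $\rho_i(x)$, so Schur's lemma gives $S = \mu\, I$. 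The scalar $\mu$ is computed by taking traces and invoking the standard orthogonality (\ref{2}): $\mu \dim\chi_i = \sum_v \overline{\chi_j(v)} \chi_i(v) = |G|\, \delta_{ij}$. The desired identity then follows from $\sum_u \chi_i(k u)\,\overline{\chi_j(u)} = \Tr(\rho_i(k)\, S) = \mu\, \chi_i(k)$.

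Neither step should present a serious obstacle, since both are direct applications of Schur's lemma together with the class-function property. The only delicate point is in part (2), where the manipulation inside the character relies on cyclicity of trace (not on genuine commutativity of group elements), so replacing $\chi_j(g h_2)$ by $\chi_j(h_2 g)$ before substituting $u = h_2 g$ is precisely what makes the reduction lossless.
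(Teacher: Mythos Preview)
Your proposal is correct. Both parts are clean applications of Schur's lemma to well-chosen intertwining operators, and all the verifications go through exactly as you sketch.

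Your route differs from the paper's. For part \textit{(1)} the paper simply cites Feit; you supply a short self-contained argument via the conjugation-averaged operator $T=\sum_g\rho_i(gh_1g^{-1})$. For part \textit{(2)} the paper works at the level of matrix coefficients: it fixes matrix realizations $\rho_i(g)=[a^{(i)}_{jk}(g)]$, forms the Schur averaging operator $B_C=\sum_g\rho_i(g)\,C\,\rho_j(g^{-1})$, extracts the elementary orthogonality relation $\sum_g a^{(i)}_{wx}(g)\,a^{(j)}_{yz}(g^{-1})=b_i\delta_{ij}\delta_{wz}\delta_{xy}$, and then multiplies by $a^{(i)}_{sw}(h_1)$ and $a^{(j)}_{ty}(h_2^{-1})$ and sums over the appropriate indices to rebuild $\chi_i(h_1g)\,\overline{\chi_j(gh_2)}$. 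Your argument bypasses all of this coordinate work: the cyclicity step $\chi_j(gh_2)=\chi_j(h_2g)$ followed by the substitution $u=h_2g$ reduces the statement to evaluating $\Tr(\rho_i(k)\,S)$ for the single intertwiner $S=\sum_v\overline{\chi_j(v)}\rho_i(v)$, whose scalar value is read off from (\ref{2}). The paper's matrix-entry approach has the virtue of yielding the finer orthogonality relation (\ref{basic3}) along the way, which one might want elsewhere; your approach is shorter and coordinate-free, and is all that is needed for the lemma as stated.
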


Lemma \ref{aux} \textit{(1)} was proved by Feit in \cite[equation (5.5)]{F}.  We now recall a classical result of Schur which will aid in the proof of Lemma \ref{aux} \textit{(2)}.  The representations $\rho_1,\rho_2,\dots,\rho_t$ can be viewed as matrix representations $\rho_i \colon G\rightarrow\mathrm{GL}_{m_i}(\mathbb{C}),$ where $m_i=\mathrm{dim}\rho_i$ for each $1\leq i\leq t$.  Namely, for each $1\leq i\leq t$ and each $g\in G$, there is a corresponding matrix
\begin{equation}
\rho_i(g)=:\Big[a_{jk}^{(i)}(g)\Big]_{1\leq j,k\leq m_i}.
\end{equation}
In particular, the image of the character $\chi_i$ for all $g\in G$ is given as the trace
\begin{equation}
\chi_i(g)=\sum_{1\leq j\leq m_i}a_{jj}^{(i)}(g).
\end{equation}

The following classic result of Schur (see Chapter 5 of \cite{CR}) gives the key relationship between two representations which leads to all of the orthogonality relations between two characters.

\begin{lemma}[Schur's Lemma]\label{Schur}
Let $G$ be a finite group, and let $V$ and $W$ be $\C$-vector spaces underlying ordinary irreducible representations of $G$.  If $f \colon V\rightarrow W$ is a $G$-linear map, then $f$ is a scalar multiple of the identity map if $V\cong W$ and $f=0$ if $V\not\cong W$.
\end{lemma}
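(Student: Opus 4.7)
The plan is to exploit the irreducibility of $V$ and $W$ through the observation that, for any $G$-linear map $f \colon V \to W$, both $\ker(f) \subseteq V$ and $\mathrm{im}(f) \subseteq W$ are $G$-invariant subspaces. I would first verify this: if $v \in \ker(f)$ and $g \in G$, then $f(g\cdot v) = g\cdot f(v) = 0$, so $g\cdot v \in \ker(f)$; similarly $g\cdot f(v) = f(g\cdot v) \in \mathrm{im}(f)$. By irreducibility of $V$ and $W$, the only possibilities are $\ker(f) \in \{0, V\}$ and $\mathrm{im}(f) \in \{0, W\}$.

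From here the case $V \not\cong W$ is immediate: if $f \neq 0$, then $\ker(f) = 0$ and $\mathrm{im}(f) = W$ together force $f$ to be a $G$-isomorphism between $V$ and $W$, contradicting the assumption $V \not\cong W$. Hence $f = 0$. For the case $V \cong W$, I would identify $W$ with $V$ via a fixed $G$-isomorphism and regard $f$ as an endomorphism of $V$. Since $\mathbb{C}$ is algebraically closed, $f$ admits at least one eigenvalue $\lambda \in \mathbb{C}$. The map $f - \lambda\cdot \mathrm{Id}_V$ is again $G$-linear, and its kernel contains the $\lambda$-eigenspace and is therefore nonzero. Applying the dichotomy from the first paragraph to $f - \lambda\cdot \mathrm{Id}_V$ forces its kernel to be all of $V$, that is, $f = \lambda\cdot \mathrm{Id}_V$.

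The only subtle point, and the place where the standing hypothesis on the base field enters, is the need for an eigenvalue in $\mathbb{C}$. Over a non-algebraically-closed field this step fails, and one obtains instead only the weaker conclusion that the endomorphism ring of an irreducible representation is a division algebra. Thus the algebraic closedness of $\mathbb{C}$ is doing the essential work in delivering the conclusion that $f$ is a scalar multiple of the identity.
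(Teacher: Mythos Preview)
Your argument is the standard, correct proof of Schur's Lemma: you use $G$-invariance of $\ker(f)$ and $\mathrm{im}(f)$ together with irreducibility to handle the non-isomorphic case, and then the existence of an eigenvalue over $\mathbb{C}$ to pin down $f$ as a scalar in the case $V\cong W$. There is nothing missing.

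As for comparison with the paper: there is nothing to compare. The paper does not supply its own proof of this lemma; it simply records it as a classical result and refers the reader to Chapter~5 of \cite{CR}. Your write-up is precisely the textbook argument one finds there, so you are not taking a different route so much as filling in what the paper deliberately omits.
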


In preparation for the proof of Lemma \ref{aux} \textit{(2)}, we let $\chi_i$ and $\chi_j$ be irreducible characters of $G$, we let $C$ be an arbitrary $m_i\times m_j$ matrix, and we define the matrix $B_C$ by
\begin{equation}\label{matrixstuff}
B_C:=\sum_{g\in G}\rho_i(g)C\rho_j\left(g^{-1}\right).
\end{equation}
Since $\rho_i$ and $\rho_j$ are homomorphisms, it follows easily  that $\rho_i(h)B_C=B_C\rho_j(h)$ for all $h\in G$.  Therefore, by Schur's Lemma we have that 
\begin{equation}\label{B}
B_C=\begin{cases}0,&\mbox{if }i\neq j,\\b_i(C)\cdot I&\mbox{if }i=j,\end{cases},
\end{equation}
 where $b_i(C)\in\mathbb{C}$, and $I$ is the identity matrix of rank $\mathrm{dim}\chi_i$.  

\begin{proof}[Proof of Lemma \ref{aux} \textit{(2)}]
To prove the claim, we shall make repeated use of (\ref{matrixstuff}) and  (\ref{B}).
Choose an arbitrary $m_i\times m_j$ matrix $C$, and denote its entries by $C:=\left[c_{st}\right]$.  We observe that
\begin{equation*}
\sum_{g\in G}\sum_{1\leq s\leq m_i}\sum_{1\leq t\leq m_j}a_{ws}^{(i)}(g)c_{st}a_{tz}^{(j)}\left(g^{-1}\right)=b_i(C)\d_{ij}\d_{wz}.
\end{equation*}
Since $B_C$ is a diagonal matrix, if $C=C(x,y)=\left[c_{st}\right]$ is defined for given $x\leq m_i,y\leq m_j$ by $c_{st}=\delta_{sx}\delta_{ty}$, then we have that
\begin{equation}\label{basic}
\sum_{g\in G}a_{wx}^{(i)}(g)a_{yz}^{(j)}\left(g^{-1}\right)=b_i(C(x,y))\d_{ij}\d_{wz}.
\end{equation}
Obviously, if $i\neq j$, then this expression vanishes, and so we consider the case where $i=j$, and this becomes
\begin{equation*}
\sum_{g\in G}a_{wx}^{(i)}(g)a_{yz}^{(i)}\left(g^{-1}\right)=b_i\left(C(x,y)\right)\d_{wz}.
\end{equation*}
The constant $b_i\left(C(x,y)\right)$ seems to depend on the choice of $x$ and $y$.  However, notice by replacing $g$ by $h^{-1}$, this gives
\begin{equation*}
\sum_{h\in G}a_{yz}^{(i)}(h)a_{wx}^{(i)}\left(h^{-1}\right)=b_i\left(C(x,y)\right)\d_{wz}=b_i\left(C(w,z)\right)\d_{xy},
\end{equation*}
which holds for all $x,y,w,$ and $z$.  The $\delta_{xy}$ term on the right hand side forces the left hand side to be zero unless $x=y$, in which case $b_i(C(x,y))=b_i(C(x,x))$.  The $\delta_{wz}$ term similarly implies that $b_i(C(w,z))=b_i(C(w,w))$.  If $b_i(C(w,w))=b_i(C(x,x))$ for all $x$ and all $w$, then $b_i(C)$ must be a constant which depends only on $\chi_i$.

Returning to the general case where $i$ might not equal $j$, since $\rho_i$ is a homomorphism, we have
\begin{equation}\label{matrixmult}
a_{sx}^{(i)}\left(h_1g\right)=\sum_{1\leq w\leq m_i}a_{sw}^{(i)}\left(h_1\right)a_{wx}^{(i)}(g).
\end{equation}
We multiply (\ref{basic}) by $a_{sw}^{(i)}\left(h_1\right)$ and sum on $w$ to obtain
\begin{eqnarray*}
\sum_{g\in G}a_{yz}^{(j)}\left(g^{-1}\right)\sum_{1\leq w\leq m_i}a_{sw}^{(i)}\left(h_1\right)a_{wx}^{(i)}(g)=b_i(C)\d_{ij}\d_{xy}\sum_{1\leq w\leq m_i}\d_{wz}a_{sw}^{(i)}\left(h_1\right),
\end{eqnarray*}
which by (\ref{matrixmult}) gives
\begin{equation}\label{basic2}
\sum_{g\in G}a_{sx}^{(i)}\left(h_1g\right)a_{yz}^{(j)}\left(g^{-1}\right)=a_{sz}^{(i)}\left(h_1\right)b_i(C)\d_{ij}\d_{xy}.
\end{equation}
Similarly, we observe that
\begin{equation}\label{matrixmult2}
a_{tz}^{(j)}\left(h_2^{-1}g^{-1}\right)=\sum_{1\leq y\leq m_j}a_{ty}^{(j)}\left(h_2^{-1}\right)a_{yz}^{(j)}\left(g^{-1}\right),
\end{equation}
so we multiply (\ref{basic2}) by $a_{ty}^{(j)}\left(h_2^{-1}\right)$ and sum on $y$ to obtain
\begin{equation}\label{basic3}
\sum_{g\in G}a_{sx}^{(i)}\left(h_1g\right)a_{tz}^{(j)}\left(h_2^{-1}g^{-1}\right)=a_{sz}^{(i)}\left(h_1\right)a_{tx}^{(j)}\left(h_2^{-1}\right)b_i(C)\d_{ij}.
\end{equation}

Now choose $x=s$ and $z=t$ so that we have
\begin{equation*}
\sum_{g\in G}a_{ss}^{(i)}\left(h_1g\right)a_{tt}^{(j)}\left(h_2^{-1}g^{-1}\right)=a_{st}^{(i)}\left(h_1\right)a_{ts}^{(j)}\left(h_2^{-1}\right)b_i(C)\d_{ij}.
\end{equation*}
This becomes a statement about the group characters if we sum on both $s$ and $t$ to obtain
\begin{eqnarray*}
\sum_{\substack{1\leq s\leq m_i\\1\leq t\leq m_j}}\sum_{g\in G}a_{ss}^{(i)}\left(h_1g\right)a_{tt}^{(j)}\left(h_2^{-1}g^{-1}\right)&=&\sum_{g\in G}\left[\sum_{1\leq s\leq m_i}a_{ss}^{(i)}\left(h_1g\right)\right]\left[\sum_{1\leq t\leq m_j}a_{tt}^{(j)}\left(h_2^{-1}g^{-1}\right)\right]
\end{eqnarray*}
on the left hand side and
\begin{eqnarray*}
\sum_{\substack{1\leq s\leq m_i\\1\leq t\leq m_j}}a_{st}^{(i)}\left(h_1\right)a_{ts}^{(j)}\left(h_2^{-1}\right)b_i(C)\d_{ij}&=&b_i(C)\d_{ij}\sum_{1\leq s\leq m_i}\left[\sum_{1\leq t\leq m_j}a_{st}^{(i)}\left(h_1\right)a_{ts}^{(j)}\left(h_2^{-1}\right)\right]\\
&=&b_i(C)\d_{ij}\sum_{1\leq s\leq m_i}a_{ss}^{(i)}\left(h_1h_2^{-1}\right)
\end{eqnarray*}
on the right.  By definition, since $\chi\left(g^{-1}\right)=\overline{\chi(g)}$, we obtain
\begin{equation}\label{almost}
\sum_{g\in G}\chi_i\left(h_1g\right)\overline{\chi_j\left(gh_2\right)}=\chi_i\left(h_1h_2^{-1}\right)b_i(C)\d_{ij}.
\end{equation}
Finally, we determine $b_i(C)$.  If $i\neq j$, then $b_i(C)=0$ by Schur's Lemma.  If $i=j$, then we set $h_1=h_2=1$ in (\ref{almost}) and apply (\ref{2}) to obtain
 $$|G|=\sum_{g\in G}\chi_i(g)\overline{\chi_i(g)}=b_i(C)m_i.$$  
 Therefore $b_i(C)=|G|/\mathrm{dim}\chi_i$, and this completes the proof.
\end{proof}

\subsection{$r$-character theory and the proof of Theorem~\ref{general}}

Here we recall some basic facts about $r$-characters, and we then use the results of the previous subsection to
prove Theorem~\ref{general}.

If $r\geq 2$, then (\ref{defn}) offers a recursive formula for $r$-characters.  For $r=2$ and 3, if $\chi$ is an irreducible character, then we find that
$\chi^{(2)}\left(g_1, g_2\right)=\chi\left(g_1\right)\chi\left(g_2\right)-\chi\left(g_1g_2\right)$
and
\begin{displaymath}
\begin{split}
\chi^{(3)}\left(g_1, g_2, g_3\right)&=\chi\left(g_1\right)\chi\left(g_2\right)\chi\left(g_3\right)-\chi\left(g_1\right)\chi\left(g_2g_3\right)-\chi\left(g_3\right)\chi\left(g_1g_2\right)\\
&\hspace{4cm}-\chi\left(g_2\right)\chi\left(g_1g_3\right)+\chi\left(g_1g_2g_3\right)+\chi\left(g_2g_1g_3\right).
\end{split}
\end{displaymath}
For dimension $r\geq2$, these characters can be identically zero (see \cite[p. 244]{HJ2}).

\begin{lemma}\label{vanish}
Let $G$ be a finite group.  If $\chi$ is an irreducible character of $G$ and $r>\mathrm{dim}\chi$, then $\chi^{(r)}\left(\underline{g}\right)=0$ for all $\underline{g}\in G^{(r)}$.
\end{lemma}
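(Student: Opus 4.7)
The plan is to prove the lemma in three stages: (i) establish a closed-form expression for $\chi^{(r)}$ as a signed sum over $S_r$ indexed by cycle structure; (ii) interpret this as the trace of a tensor operator composed with the antisymmetrizer on $V^{\otimes r}$; and (iii) invoke the vanishing of $\Lambda^r V$ when $r>\dim V=\dim\chi$.

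First I would prove by induction on $r$ that if $\rho\colon G\to\GL(V)$ realizes $\chi$, then
$$\chi^{(r)}(g_1,\dots,g_r)=\sum_{\sigma\in S_r}\mathrm{sgn}(\sigma)\prod_{(c_1,\dots,c_\ell)\in\mathrm{cyc}(\sigma)}\chi(g_{c_1}g_{c_2}\cdots g_{c_\ell}),$$
where the cycle product is well-defined because $\chi$ is a class function. The cases $r=1,2$ are immediate from the definitions. For the inductive step I would partition $S_r$ according to $j:=\sigma(1)$. The fixed-point case $j=1$ assembles to $\chi(g_1)\chi^{(r-1)}(g_2,\dots,g_r)$. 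For each $j\in\{2,\dots,r\}$, excising $1$ from its $\sigma$-cycle provides a bijection $\{\sigma\in S_r:\sigma(1)=j\}\longleftrightarrow S(\{2,\dots,r\})$ under which $\mathrm{sgn}(\sigma)=-\mathrm{sgn}(\tau)$ (shortening a cycle by one flips the sign), while the cycle product of $\sigma$ on $(g_1,\dots,g_r)$ equals the cycle product of $\tau$ on $(g_2,\dots,g_{j-1},g_1g_j,g_{j+1},\dots,g_r)$. Summing over $j$ recovers the recursion (\ref{defn}) exactly.

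Next, a direct matrix computation with the permutation operator $P_\sigma(v_1\otimes\cdots\otimes v_r):=v_{\sigma^{-1}(1)}\otimes\cdots\otimes v_{\sigma^{-1}(r)}$ on $V^{\otimes r}$ shows that
$$\Tr_{V^{\otimes r}}\!\left((\rho(g_1)\otimes\cdots\otimes\rho(g_r))\,P_\sigma\right)=\prod_{(c_1,\dots,c_\ell)\in\mathrm{cyc}(\sigma)}\chi(g_{c_1}\cdots g_{c_\ell}).$$
Combined with the closed form above, this yields
$$\chi^{(r)}(g_1,\dots,g_r)=\Tr_{V^{\otimes r}}\!\left((\rho(g_1)\otimes\cdots\otimes\rho(g_r))\,A_r\right),\qquad A_r:=\sum_{\sigma\in S_r}\mathrm{sgn}(\sigma)\,P_\sigma.$$
The image of the antisymmetrizer $A_r$ lies in $\Lambda^rV\subseteq V^{\otimes r}$, which is the zero subspace whenever $r>\dim V$: concretely, for any basis tensor $e_{i_1}\otimes\cdots\otimes e_{i_r}$ with $i_k\in\{1,\dots,\dim V\}$, the pigeonhole principle forces $i_a=i_b$ for some $a<b$, and pairing each $\sigma$ with $\sigma\circ(a\,b)$ in the sum for $A_r$ produces cancellation. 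Hence $A_r=0$ on $V^{\otimes r}$, and therefore $\chi^{(r)}(g_1,\dots,g_r)=0$.

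The main obstacle is the combinatorial bookkeeping in the first step, especially verifying that the cycle-splicing bijection simultaneously tracks the sign change and the translation $g_j\mapsto g_1g_j$ correctly. Once the Frobenius-style closed form is secured, the tensor interpretation and the vanishing of $\Lambda^rV$ are routine multilinear algebra. A purely combinatorial induction (avoiding the tensor viewpoint altogether) is possible, but the $\Lambda^rV=0$ argument makes the dimensional obstruction conceptually transparent.
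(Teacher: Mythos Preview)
Your argument is correct. The closed-form expansion over $S_r$ in your step (i) is precisely the identity the paper itself invokes (without proof) inside the proof of Theorem~\ref{general}, and your antisymmetrizer interpretation in steps (ii)--(iii) is the standard way to see the vanishing. One bookkeeping caution: with the convention $P_\sigma(v_1\otimes\cdots\otimes v_r)=v_{\sigma^{-1}(1)}\otimes\cdots\otimes v_{\sigma^{-1}(r)}$, the cycle factor one actually gets is $\chi(g_{c_\ell}\cdots g_{c_1})$ rather than $\chi(g_{c_1}\cdots g_{c_\ell})$; this is harmless for the final conclusion because $A_r=\sum_\sigma\mathrm{sgn}(\sigma)P_\sigma$ is unchanged under $\sigma\mapsto\sigma^{-1}$, but you should align the conventions so that steps (i) and (ii) literally match.

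As for comparison with the paper: the paper does not supply its own proof of this lemma at all---it simply records the statement with a pointer to \cite[p.~244]{HJ2}. Your write-up therefore provides a self-contained justification where the paper defers to the literature, and in the process you also substantiate the $S_r$-expansion formula that the paper later asserts ``is easy to see'' when proving Theorem~\ref{general}.
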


Generalizing (\ref{1}), we obtain the following lemma.

\begin{lemma}\label{zero}
Let $G$ be a finite group.  If $\chi$ is a nontrivial irreducible character of $G$, then for any integer $r\geq1$, we have that $$\sum_{\underline{g}\in G^{(r)}}\chi^{(r)}\left(\underline{g}\right)=0.$$
\end{lemma}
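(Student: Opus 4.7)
The plan is a straightforward induction on $r$. The base case $r=1$ is exactly the classical vanishing relation~(\ref{1}) for a nontrivial irreducible character. So assume the result holds for $r-1$ with $r\geq 2$, and sum the recursive definition~(\ref{defn}) over all $\underline{g}=(g_1,\dots,g_r)\in G^{(r)}$:
\begin{align*}
\sum_{\underline{g}\in G^{(r)}}\chi^{(r)}(\underline{g})
&=\sum_{g_1\in G}\chi(g_1)\sum_{(g_2,\dots,g_r)\in G^{(r-1)}}\chi^{(r-1)}(g_2,\dots,g_r)\\
&\quad-\sum_{k=2}^{r}\sum_{(g_1,\dots,g_r)\in G^{(r)}}\chi^{(r-1)}(g_2,\dots,g_1g_k,\dots,g_r).
\end{align*}

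The first term on the right factors as $\bigl(\sum_{g_1}\chi(g_1)\bigr)\bigl(\sum_{(g_2,\dots,g_r)}\chi^{(r-1)}(g_2,\dots,g_r)\bigr)$, and the first factor already vanishes by~(\ref{1}). For each fixed $k\in\{2,\dots,r\}$, I would then perform the change of variables $h_j:=g_j$ for $j\in\{2,\dots,r\}\setminus\{k\}$ and $h_k:=g_1g_k$; since left multiplication by $g_1$ is a bijection on $G$, as $g_k$ ranges over $G$ so does $h_k$, independently of $g_1$. Hence
$$\sum_{(g_1,\dots,g_r)\in G^{(r)}}\chi^{(r-1)}(g_2,\dots,g_1g_k,\dots,g_r)=|G|\sum_{(h_2,\dots,h_r)\in G^{(r-1)}}\chi^{(r-1)}(h_2,\dots,h_r),$$
which vanishes by the inductive hypothesis. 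Summing the $r-1$ contributions from $k=2,\dots,r$, the entire right-hand side is zero, completing the induction.

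The argument contains no real obstacle: both the first factor of the ``main'' term and each of the ``correction'' terms independently vanish (the former via the base case~(\ref{1}) applied to $\chi$ itself, the latter via the inductive hypothesis applied after absorbing $g_1$ into $g_k$). The only thing to take care with is the bookkeeping in the change of variables, making sure that the sum over $g_1$ simply produces the factor $|G|$ because the remaining sum is independent of $g_1$.
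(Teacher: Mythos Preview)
Your proof is correct and follows essentially the same route as the paper: induction on $r$, expand via the recursion~(\ref{defn}), and kill each summand separately using the bijection $g_k\mapsto g_1g_k$ together with the inductive hypothesis. The only cosmetic difference is that for the leading product term the paper invokes the inductive hypothesis on the inner $(r-1)$-fold sum, whereas you instead kill the outer factor $\sum_{g_1}\chi(g_1)$ directly via~(\ref{1}); either justification works.
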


\begin{proof}
When $r=1$, the result is simply (\ref{1}).  Now, assume for $r\geq1$ that $$\sum_{g_1,\dots,g_r\in G}\chi_i^{(r)}\left(g_1,\dots,g_r\right)=0.$$   Since $G^{(r+1)}=G\times G^{(r)}$, (\ref{defn}) implies that
\begin{align*}
&\sum_{\left(g_1,\dots,g_{r+1}\right)\in G^{(r+1)}}\chi_i^{(r+1)}\left(g_1,\dots,g_{r+1}\right)=\sum_{g_1\in G}\chi_i\left(g_1\right)\sum_{\left(g_2,\dots,g_{r+1}\right)\in G^{(r)}}\chi_i^{(r)}\left(g_2,\dots,g_{r+1}\right)\\
&\hspace{1.5cm}-\sum_{g_1\in G}\Bigg[\sum_{\left(g_2,\dots,g_{r+1}\right)\in G^{(r)}}\chi_i^{(r)}\left(g_1g_2,g_3,\dots,g_{r+1}\right)-\sum_{\left(g_2,\dots,g_{r+1}\right)\in G^{(r)}}\chi_i^{(r)}\left(g_2,g_1g_3,\dots,g_{r+1}\right)\\
&\hspace{8cm}-\cdots-\sum_{\left(g_2,\dots,g_{r+1}\right)\in G^{(r)}}\chi_i^{(r)}\left(g_2,g_3,\dots,g_1g_{r+1}\right)\Bigg].
\end{align*}
The bracketed expression inside the sum over $g_1$ is the sequential shift of the location of $g_1$ through the elements $g_2,\dots,g_{r+1}$.  The induction hypothesis and the observation that $g_1g_j$ varies over $G$ as $g_j$ varies over $G$ then imply the result.
\end{proof}

We are now able to prove Theorem~\ref{general}.

\begin{proof}[Proof of Theorem~\ref{general}]

If $i\neq j$, then it follows from \cite[Theorem 2.1]{J} that the sum is zero.  Also, if $G$ is abelian, then $G$ has only one-dimensional characters, so the sum is zero for all $r>1$ by Lemma \ref{vanish}.

For the remainder of the proof, we assume that $G$ is non-abelian and that $i=j$, and we let $\chi=\chi_i$ for simplicity.  We prove Theorem \ref{general} by writing the $r$-character $\chi^{(r)}$ in terms of the action of the symmetric group $S_r$ on products of $\chi$-values.  For $\sigma\in S_r$, let $n(\sigma)$ be the number of disjoint cycles in $\sigma$, including 1-cycles, and denote
\begin{equation}\label{newsigma}
\sigma=\left(a_1^\sigma(1),\dots,a_1^\sigma\left(k_1^\sigma\right)\right)\left(a_2^\sigma(1),\dots,a_2^\sigma\left(k_2^\sigma\right)\right)\cdots\left(a_{n(\sigma)}^\sigma(1),\dots,a_{n(\sigma)}^\sigma\left(k_{n(\sigma)}^\sigma\right)\right).
\end{equation}
The cycles have length $k_1^\sigma,k_2^\sigma,\dots,k_{n(\sigma)}^\sigma$, and as sets
\begin{equation*}
\{1,2,\dots,r\}=\left\{a_1^\sigma(1),\dots,a_1^\sigma\left(k_1^\sigma\right),a_2^\sigma(1),\dots,a_2^\sigma\left(k_2^\sigma\right),\dots,a_{n(\sigma)}^\sigma(1),\dots,a_{n(\sigma)}^\sigma\left(k_{n(\sigma)}^\sigma\right)\right\}.
\end{equation*}
With this notation, it is easy to see that (\ref{defn}) (also see p. 301 of \cite{J}) can be iterated to obtain the following formulas for values of $r$-characters as products of $\chi$-values.  We abuse notation and write $a$ for $g_a$.
\begin{align*}
\chi^{(r)}\left(g_1,\dots,g_r\right)=\sum_{\sigma\in S_r}\mathrm{sgn}(\sigma)\chi\left(a_1^\sigma(1)\cdots a_1^\sigma\left(k_1^\sigma\right)\right)\cdots\chi\left(a_{n(\sigma)}^\sigma(1)\cdots a_{n(\sigma)}^\sigma\left(k_{n(\sigma)}^\sigma\right)\right).
\end{align*}
Using the notation above, the sum in Theorem \ref{general} can now be rewritten, where $\underline{g}=\left(g_1,\dots,g_r\right)$, as
\begin{align}
\Omega&:=\sum_{\underline{g}\in G^{(r)}}\chi^{(r)}\left(\underline{g}\right)\overline{\chi^{(r)}\left(\underline{g}\right)}=\sum_{\underline{g}=\left(g_1,\dots,g_r\right)\in G^{(r)}}\chi^{(r)}\left(g_1,\dots,g_r\right)\overline{\chi^{(r)}\left(g_1,\dots,g_r\right)}\nonumber\\
\label{star}&=\sum_{\sigma,\tau\in S_r}\mathrm{sgn}(\sigma)\mathrm{sgn}(\tau)\sum_{\underline{g}\in G^{(r)}}\chi\left(a_1^\sigma(1)\cdots a_1^\sigma\left(k_1^\sigma\right)\right)\cdots\chi\left(a_{n(\sigma)}^\sigma(1)\cdots a_{n(\sigma)}^\sigma\left(k_{n(\sigma)}^\sigma\right)\right)\\
&\hspace{5cm}\times\overline{\chi\left(a_1^\tau(1)\cdots a_1^\tau\left(k_1^\tau\right)\right)}\cdots\overline{\chi\left(a_{n(\tau)}^\tau(1)\cdots a_{n(\tau)}^\tau\left(k_{n(\tau)}^\tau\right)\right)}.\nonumber
\end{align}
Now observe without loss of generality that we may order the cycles so that $g_r$ is in the last cycles $\left(a_{n(\sigma)}^\sigma(1),\dots,a_{n(\sigma)}^\sigma\left(k_{n(\sigma)}^\sigma\right)\right)$ and $\left(a_{n(\tau)}^\tau(1),\dots,a_{n(\tau)}^\tau\left(k_{n(\tau)}^\tau\right)\right)$.  Therefore, it follows that
\begin{align*}
\Omega=\sum_{\sigma,\tau\in S_r}\mathrm{sgn}(\sigma)\mathrm{sgn}(\tau)&\sum_{g_1,\dots,g_{r-1}\in G}\bigg[\chi\left(a_1^\sigma(1)\cdots a_1^\sigma\left(k_1^\sigma\right)\right)\cdots\chi\left(a_{n(\sigma)-1}^\sigma(1)\cdots a_{n(\sigma)-1}^\sigma\left(k_{n(\sigma)-1}^\sigma\right)\right)\\
&\hspace{1cm}\times\overline{\chi\left(a_1^\tau(1)\cdots a_1^\tau\left(k_1^\tau\right)\right)}\cdots\overline{\chi\left(a_{n(\tau)-1}^\tau(1)\cdots a_{n(\tau)-1}^\tau\left(k_{n(\tau)-1}^\tau\right)\right)}\bigg]\\
&\hspace{1cm}\times\sum_{g_r\in G}\chi\left(a_{n(\sigma)}^\sigma(1)\cdots a_{n(\sigma)}^\sigma\left(k_{n(\sigma)}^\sigma\right)\right)\overline{\chi\left(a_{n(\tau)}^\tau(1)\cdots a_{n(\tau)}^\tau\left(k_{n(\tau)}^\tau\right)\right)}.
\end{align*}
This last inner sum on $g_r$ can be evaluated by Lemma \ref{aux}.  We assume without loss of generality that the sum over $g_r$ is of the form $\sum_{g_r}\chi\left(A(\sigma)\cdot g_r\right)\overline{\chi\left(g_r\cdot A(\tau)\right)},$ where we use $A(\sigma),A(\tau)$ to denote the products of the remaining elements of $G$ in this particular sum which of course depend on $\sigma$ and $\tau$ (respectively).  Lemma \ref{aux} then eliminates $g_r$ from the sum and results in $\chi\left(A(\sigma)\cdot A(\tau)^{-1}\right)$ multiplied by $|G|/\mathrm{dim}\chi$.  This leaves a sum on $g_1,\dots,g_{r-1}$, where each of these elements appears in exactly one $\chi$ and exactly one $\overline{\chi}$, before possible cancellations.  If applying Lemma \ref{aux} results in the cancellation of a group element $\big($for example, if the rightmost element of $A(\sigma)$ is the inverse of the leftmost element of $A(\tau)^{-1}\big)$, then the sum over that group element is simply the sum of 1 over all elements in the group, so it contributes $|G|$.  

To complete the proof, we repeat this argument one-by-one, first with $g_{r-1}$, then $g_{r-2}$, and so on.
By applying the appropriate 1-character orthogonality relation for each of the remaining inner sums, we find that if we write the product $\sigma\tau^{-1}=x_1x_2x_3\cdots$ into disjoint cycles, and if we define $$m(\sigma,\tau):=\sum_{1\leq j\leq n\left(\sigma\tau^{-1}\right)}\left[\mathrm{length}\left(x_j\right)-1\right],$$
then we have that
\begin{equation}\label{inter}
\Omega=\sum_{\sigma,\tau\in S_r}\mathrm{sgn}(\sigma)\mathrm{sgn}(\tau)\frac{|G|^r}{(\mathrm{dim}\chi)^{m(\sigma,\tau)}}.
\end{equation}

It remains to show that 
\begin{equation}\label{formula}
\sum_{\sigma,\tau\in S_r}\mathrm{sgn}(\sigma)\mathrm{sgn}(\tau)\frac{|G|^r}{(\mathrm{dim}\chi)^{m(\sigma,\tau)}}=\frac{r!|G|^r}{(\mathrm{dim}\chi)^{r-1}}(\mathrm{dim}\chi-1)\cdots\big(\mathrm{dim}\chi-(r-1)\big).
\end{equation}
Recall that the Stirling numbers of the first kind \cite[(3.5.2), p.82]{Wilf}, denoted $\stirlingI{n}{k}$, count the number of permutations of $n$ with exactly $k$ disjoint cycles and are defined by the generating function $$\sum_{k=0}^n(-1)^{n-k}\stirlingI{n}{k}x^k=x(x-1)(x-2)\cdots(x-n+1).$$  The coefficient of $(\mathrm{dim}\chi)^i$ in the product $(\mathrm{dim}\chi-1)\cdots(\mathrm{dim}\chi-(r-1))$ is $(-1)^{r-i}\stirlingI{r}{i}/\mathrm{dim}\chi$, and so the coefficient of $1/(\mathrm{dim}\chi)^{r-i}$ on the right-hand side of (\ref{formula}) is $$(-1)^{r-i}\stirlingI{r}{i}r!|G|^r.$$

Now, it suffices to show that the coefficient of $1/(\mathrm{dim}\chi)^{r-i}$ in the sum
\begin{equation}\label{LHSsum}
\sum_{\sigma,\tau\in S_r}\mathrm{sgn}(\sigma)\mathrm{sgn}(\tau)\frac{1}{(\mathrm{dim}\chi)^{m(\sigma,\tau)}}
\end{equation}
is $(-1)^{r-i}\stirlingI{r}{i}r!$.  We rewrite $m(\sigma,\tau)$ as $$m(\sigma,\tau)=\sum_{1\leq j\leq n\left(\sigma\tau^{-1}\right)}\mathrm{length}\left(x_j\right)-n\left(\sigma\tau^{-1}\right)=r-n\left(\sigma\tau^{-1}\right),$$ since $\sigma\tau^{-1}=x_1x_2\cdots x_{n\left(\sigma\tau^{-1}\right)}$ is a product of disjoint cycles, including fixed points.
We must evaluate the term of the sum \eqref{LHSsum} corresponding to $n\left(\sigma\tau^{-1}\right)=i$.  For fixed $\sigma\in S_r$, the number of $\tau\in S_r$ such that $n\left(\sigma\tau^{-1}\right)=i$ is equal to the number of $\tau\in S_r$ that can be written as a product of $i$ disjoint cycles, since $\left\{\sigma\tau^{-1}:\tau\in S_r\right\}=S_r$ as sets.  Therefore, for fixed $\sigma$, there are $\stirlingI{r}{i}$ such $\tau\in S_r$.  Since there are $r!$ possibilities for $\sigma$, we have that there are a total of $\stirlingI{r}{i}r!$ pairs $(\sigma,\tau)\in S_r^{(2)}$ such that $n\left(\sigma\tau^{-1}\right)=i$.
We now show that the product of signatures $\mathrm{sgn}(\sigma)\mathrm{sgn}(\tau)$ for each such pair $(\sigma,\tau)$ equals $(-1)^{r-i}$.  Since $\mathrm{sgn}(\sigma)\mathrm{sgn}(\tau)=\mathrm{sgn}\left(\sigma\tau^{-1}\right)$, it suffices to evaluate $\mathrm{sgn}\left(\sigma\tau^{-1}\right)$ for each such $(\sigma,\tau)$.
Suppose that in the decomposition $\sigma\tau^{-1}=x_1x_2\cdots x_i$, there are $n_\ell$ $\ell$-cycles for each $1\leq\ell\leq r$ and one additional cycle, say $x_k$, that is not written in this way.  Then we have that $\sum_{1\leq\ell\leq r}n_\ell=i-1$, and the composition of all of the $\ell$-cycles with $x_k$ makes up $\sigma\tau^{-1}$.
The composition of all $\ell$-cycles except $x_k$ has signature $(-1)^{\sum_{2\leq\ell\leq r}n_\ell(\ell-1)},$ where the sum begins with $\ell=2$ since fixed points do not contribute to signature.  The remaining cycle $x_k$ has length $r-\left(\sum_{1\leq\ell\leq r}n_\ell\ell\right)$, so its signature is $(-1)^{r-\left(\sum_{1\leq\ell\leq r}n_\ell\ell\right)-1}$.
Therefore, we have that $$\mathrm{sgn}\left(\sigma\tau^{-1}\right)=(-1)^{\sum_{2\leq\ell\leq r}n_\ell(\ell-1)+r-\left(\sum_{1\leq\ell\leq r}n_\ell\ell\right)-1}=(-1)^{r-i}.$$
Thus, the coefficient of $1/(\mathrm{dim}\chi)^{r-i}$ in the sum \eqref{LHSsum} is $$(-1)^{r-i}\stirlingI{r}{i}r!.$$  This completes the proof.
%If $1\leq i\leq r$, then  the coefficient of $(\mathrm{dim}\chi)^{r-i}$ on the right hand side is $$(-1)^{i-1}\cdot\frac{r!|G|^r}{(\mathrm{dim}\chi)^{r-1}}\cdot\frac{r(r-1)\cdots(r-(i-1))}{i}.$$  We will now show that the left hand sum gives the same coefficient.  Clearly, if we fix an element $g\in S_r$, then $\{gh:h\in S_r\}=S_r$ as sets.  Then the number of $i$-cycles in $\{\sigma\tau^{-1}:\sigma\in S_r,\tau\in S_r\}$ equals the product of the number of $i$-cycles in $S_r$ with the number of elements in $S_r$.  Since $\mathrm{sgn}(\sigma)\mathrm{sgn}(\tau)$ contributes $(-1)^{i-1}$ for each $i$-cycle appearing in $\sigma\tau^{-1}$, and since $$\frac{1}{(\mathrm{dim}\chi)^{i-1}}=\frac{1}{(\mathrm{dim}\chi)^{r-1}}\cdot(\mathrm{dim}\chi)^{r-i},$$ we see that the coefficient of $(\mathrm{\dim}\chi)^{r-i}$ on the left hand side is $$(-1)^{i-1}\cdot\frac{r!|G|^r}{(\mathrm{dim}\chi)^{r-1}}\cdot\frac{r(r-1)\cdots(r-(i-1))}{i}.$$  This completes the proof.
\end{proof}

\section{Proofs of Theorems \ref{moonshine} and \ref{multiplicities}}\label{moonshineproofs}

We now prove Theorems \ref{moonshine} and \ref{multiplicities}.  Theorem \ref{moonshine} guarantees that weak moonshine can be extended to width $s$.  Theorem \ref{multiplicities} shows that the higher width orthogonality relations for Frobenius $r$-characters are compatible with width $s$ weak moonshine.  Namely, we show how to determine the multiplicity generating functions for the representation space for each nontrivial $\rho_i$ in the graded $G$-module $V_G$ using the higher width McKay-Thompson series.

\subsection{Proof of Theorem \ref{moonshine}}

By the Schur orthogonality relations for 1-characters, the multiplicity generating functions are given by
\begin{equation}\label{modular}
\mathcal{M}_i(\tau):=\sum_{n\gg-\infty}m_i(n)q^n=\sum_{n\gg-\infty}\frac{1}{|G|}\sum_{g\in G}\overline{\chi_i(g)}\mathrm{Frob}_1(g;n)q^n=\frac{1}{|G|}\sum_{g\in G}\overline{\chi_i(g)}T(1,g;\tau).
\end{equation}

Theorem 1.1 of \cite{REU} guarantees that  there is a $G$-module $V_G=\bigoplus_n V_G(n)$, $q$-graded traces $T(1,g;\tau)$ which are modular functions for all $g\in G$, and non-negative integer multiplicities $m_i(n)$ for the representation spaces of each $\rho_i$ in $V_G(n)$.  Moreover, the results in Section 5 of \cite{REU}  guarantee that $V_G$ can be chosen to be asymptotically regular.
Therefore,
it suffices to construct the McKay-Thompson series for $V_G^{(r)}$ for $r>1$.  By the definitions of the generalized graded trace functions and the $r$-Frobenius of $\underline{g}\in G^{(r)}$ on $V_G^{(r)}(n)$, for each $1\leq r\leq s$ we have 
\begin{align*}
T\left(r,\underline{g};\tau\right)&=\sum_{n\gg-\infty}\mathrm{Frob}_r\left(\underline{g};n\right)q^n=\sum_{n\gg-\infty}\sum_{1\leq j\leq t}m_j(n)\chi_j^{(r)}\left(\underline{g}\right)q^n=\sum_{1\leq j\leq t}\chi_j^{(r)}\left(\underline{g}\right)\mathcal{M}_j(\tau).
\end{align*}
Since all of the $\mathcal{M}_j(\tau)$ are modular functions by (\ref{modular}), we must have that the $T\left(r,\underline{g};\tau\right)$ are modular functions as well for each $\underline{g}\in G^{(r)}$.
\qed

\subsection{Proof of Theorem \ref{multiplicities}}
The multiplicity generating functions $\mathcal{M}_i(\tau)=\sum_{n\gg -\infty} m_i(n)q^n$ may be expressed in terms of the $T\left(r,\underline{g};\tau\right)$ when  $\dim \chi_i \geq r$.  By Theorem \ref{general}, we have that
\begin{align*}
\mathcal{M}_i(\tau)&
=\sum_{n\gg-\infty}\frac{\left(\mathrm{dim}\chi_i\right)^{r-1}}{r!|G|^r\left(\mathrm{dim}\chi_i-1\right)\cdots\left(\mathrm{dim}\chi_i-(r-1)\right)}\sum_{\underline{g}\in G^{(r)}}\overline{\chi_i^{(r)}\left(\underline{g}\right)}\sum_{1\leq j\leq t}m_j(n)\chi_j^{(r)}\left(\underline{g}\right)q^n\\
&=\sum_{n\gg-\infty}\frac{\left(\mathrm{dim}\chi_i\right)^{r-1}}{r!|G|^r\left(\mathrm{dim}\chi_i-1\right)\cdots\left(\mathrm{dim}\chi_i-(r-1)\right)}\sum_{\underline{g}\in G^{(r)}}\overline{\chi_i^{(r)}\left(\underline{g}\right)}\mathrm{Frob}_r\left(\underline{g};n\right)q^n\\
&=\frac{\left(\mathrm{dim}\chi_i\right)^{r-1}}{r!|G|^r\left(\mathrm{dim}\chi_i-1\right)\cdots\left(\mathrm{dim}\chi_i-(r-1)\right)}\sum_{\underline{g}\in G^{(r)}}\overline{\chi_i^{(r)}\left(\underline{g}\right)}T\left(r,\underline{g};\tau\right).
\end{align*}
Therefore, the number of copies of the representation space for $\rho_i$ in all of the graded components $V_G^{(r)}(n)$ for all $1\leq r\leq\mathrm{dim}\chi_i$ are given as the Fourier coefficients of the above linear combination of the modular McKay-Thompson series.
\qed

\section{Example: $D_4$ and $Q_8$}\label{example}

The dihedral group $D_4$ and the quaternion group $Q_8$, given by
$$D_4=\{1,r,r^2,r^3,s,rs,r^2s,r^3s\}\ \ \  {\text {\rm and}}\ \ \ Q_8=\{1,-1,i,-i,j,-j,k,-k\},$$
have the same character table.
%\vspace{.1cm}

\begin{center}
\begin{table}[H]
\caption{Character Table for $D_4$ and $Q_8$}
\begin{tabular}{|c|ccccc|}
\hline
$\mathbf{D_4}$&$\{1\}$&$\{r^2\}$&$\{r,r^3\}$&$\{s,r^2s\}$&$\{rs,r^3s\}$\\
\hline
$\mathbf{Q_8}$&$\{1\}$&$\{-1\}$&$\{i,-i\}$&$\{j,-j\}$&$\{k,-k\}$\\
\hline
\hline
&$C_1$&$C_2$&$C_3$&$C_4$&$C_5$\\
\hline
$\chi_1$&1&1&1&1&1\\
$\chi_2$&1&1&$-1$&1&$-1$\\
$\chi_3$&1&1&$-1$&$-1$&1\\
$\chi_4$&1&1&1&$-1$&$-1$\\
$\chi_5$&2&$-2$&0&0&0\\
\hline
\end{tabular}
\end{table}
%\caption{Character Table for $D_4$ and $Q_8$}
\end{center}
%\vspace{.1cm}
We abused notation by letting $C_1, C_2, C_3, C_4,$ and $C_5$ denote the five conjugacy classes of both $D_4$ and $Q_8$.
In both cases $C_1$ and $C_2$ contain a single group element, while the other conjugacy classes contain 2 group elements.

These groups share weak moonshine where the McKay-Thompson series are the Hauptmoduln for $\Gamma_0(1),$ $\Gamma_0(2),$ and $\Gamma_0(4)$.  For convenience, we let $f_1(\tau)$, $f_2(\tau)$, and $f_4(\tau)$ be these Hauptmoduln, which are given by
\begin{align*}
f_1(\tau)&:=J(\tau)=q^{-1}+196884q+21493760q^2+864299970q^3+20245856256q^4+O\left(q^5\right),\\
f_2(\tau)&:=\left(\frac{\eta(\tau)}{\eta(2\tau)}\right)^{24}+24=q^{-1}+276q-2048q^2+11202q^3-49152q^4+184024q^5+O\left(q^6\right),\\
f_4(\tau)&:=\left(\frac{\eta(\tau)}{\eta(4\tau)}\right)^8+8=q^{-1}+20q-62q^3+216q^5-641q^7+1636q^9-3778q^{11}+O\left(q^{13}\right).
\end{align*}
Here $\eta(\tau):=q^{1/24}\prod_{n=1}^{\infty}(1-q^n)$ is Dedekind's eta-function.  For each of the five conjugacy classes $C_j$, denote the McKay-Thompson series corresponding to any $g\in C_j$ by $T\left(1,C_j;\tau\right)$.  For $D_4$, choose $T\left(1,C_j;\tau\right)$  to be the Hauptmodul $f_j(\tau)$ of level $\mathrm{ord}(g)$ for elements $g\in C_j$.  
For both groups, we have the following McKay-Thompson series:
\begin{displaymath}
\begin{split}
T\left(1,C_1;\tau\right)&:=f_1(\tau),\ \ 
T\left(1,C_2;\tau\right):=f_2(\tau),\ \ 
T\left(1,C_3;\tau\right):=f_4(\tau),\\
&T\left(1,C_4;\tau\right):=f_2(\tau),\ \
T\left(1,C_5;\tau\right):=f_2(\tau).
\end{split}
\end{displaymath}
For each $1\leq i\leq 5$, we use (\ref{2}) to compute the generating function $\mathcal{M}_i(\tau)$ of the multiplicities $m_i(n)$ of the representation spaces for each $\rho_i$ in $V_{D_4}(n)$ and $V_{Q_8}(n)$ to be
\begin{align*}
\mathcal{M}_i(\tau)=\frac{1}{8}\left(f_1\chi_i\left(C_1\right)+f_2\overline{\chi_i\left(C_2\right)}+2f_4\overline{\chi_i\left(C_3\right)}+2f_2\overline{\chi_i\left(C_4\right)}+2f_2\overline{\chi_i\left(C_5\right)}\right),
\end{align*}
where $\chi_i\left(C_j\right)$ denotes the value of the character $\chi_i$ at any element $g\in C_j$.  The first few terms of each multiplicity generating function for both $D_4$ and $Q_8$ are given below:
\begin{align*}
\mathcal{M}_1(\tau)&=q^{-1}+24788q+2685440q^2+108044482q^3+O\left(q^4\right),\\
\mathcal{M}_2(\tau)&=24640q+2686464q^2+108038912q^3+O\left(q^4\right),\\
\mathcal{M}_3(\tau)&=24640q+2686464q^2+108038912q^3+O\left(q^4\right),\\
\mathcal{M}_4(\tau)&=24512q+2687488q^2+108033280q^3+O\left(q^4\right),\\
\mathcal{M}_5(\tau)&=49152q+5373952q^2+216072192q^3+O\left(q^4\right).
\end{align*}
Note that $\mathcal{M}_2(\tau)=\mathcal{M}_3(\tau)$.  It is not difficult to prove
(for example, see \cite{DGO1})  that this moonshine is asymptotically regular.
More precisely, if $1\leq i\leq 5$, then let
$$
\delta_i(n):=\frac{m_i(n)}{m_1(n)+m_2(n)+m_3(n)+m_4(n)+m_5(n)}.
$$
The asymptotic regularity is given by
$$
\lim_{n\rightarrow +\infty}\delta_i(n)=\frac{\dim \chi_i}{\sum_{j=1}^5 \dim \chi_j}
=\begin{cases} \frac{1}{6} \ \ \ \ \ &{\text {\rm if}}\ 1\leq i\leq 4,\\
\frac{1}{3} \ \ \ \ \ &{\text {\rm if}}\ i=5.
\end{cases}
$$
The table below illustrates the rapid convergence exhibited by this moonshine.

\vspace{.1cm}
\begin{table}[H]\label{tab:intro-deltas}
\caption{Asymptotic Distributions}
\begin{tabular}{|c|c|c|c|c|c|}\hline
$n$&$\delta_1(n)$&$\delta_2(n)=\delta_3(n)$&$\delta_4(n)$& $\delta_5(n)$
\\\hline 
$1$& $0.16779\dots$     &  $0.16678\dots$ & $0.16592\dots$ &  $0.33271\dots$ \\ 
$2$& $0.16659\dots$     &  $0.16665\dots$ & $0.16671\dots$ &  $0.33337\dots$ \\ 
$3$& $0.16666\dots$     &  $0.16666\dots$ & $0.16665\dots$ &  $0.33332\dots$ \\ 
$4$& $0.16666\dots$     &   $0.16666\dots$ & $0.16666\dots$ &  $0.33333\dots$ \\ 
$\vdots$& $\vdots$     &  $\vdots$ & $\vdots$ &  $\vdots$ \\ 
\hline
\end{tabular}
\end{table}
\vspace{.1cm}

To illustrate Theorem \ref{moonshine}, we now extend to width 2 weak moonshine.  We consider the 2-character tables and McKay-Thompson series at corresponding pairs of elements $\left(g_1,g_2\right)$ in $D_4^{(2)}$ and $Q_8^{(2)}$.  The 2-character tables contain four rows of zeros corresponding to $\chi_1,\dots,\chi_4$ which have dimension 1, and one possibly non-zero row corresponding to the values of $$\chi_5^{(2)}\left(g_1,g_2\right):=\chi_5\left(g_1\right)\chi_5\left(g_2\right)-\chi_5\left(g_1g_2\right).$$
All of the values in the 2-character tables of $D_4$ and $Q_8$ are identical except eight, which are lined up below based on the elements' conjugacy classes:
\begin{eqnarray*}
\chi_5^{(2)}(s,r^2s)=2,&\hspace{.5cm}&\chi_5^{(2)}(j,-j)=-2,\\
\chi_5^{(2)}(s,s)=-2,&\hspace{.5cm}&\chi_5^{(2)}(j,j)=2,\\
\chi_5^{(2)}(r^2s,s)=2,&\hspace{.5cm}&\chi_5^{(2)}(-j,j)=-2,\\
\chi_5^{(2)}(r^2s,r^2s)=-2,&\hspace{.5cm}&\chi_5^{(2)}(-j,-j)=2,\\
\chi_5^{(2)}(rs,r^3s)=2,&\hspace{.5cm}&\chi_5^{(2)}(k,-k)=-2,\\
\chi_5^{(2)}(rs,rs)=-2,&\hspace{.5cm}&\chi_5^{(2)}(k,-k)=2,\\
\chi_5^{(2)}(r^3s,rs)=2,&\hspace{.5cm}&\chi_5^{(2)}(k,-k)=-2,\\
\chi_5^{(2)}(r^3s,r^3s)=-2,&\hspace{.5cm}&\chi_5^{(2)}(k,-k)=2.
\end{eqnarray*}
We now illustrate that the width 2 McKay-Thompson series are different for these two groups.  Consider the pair $\left(r^3s,rs\right)\in D_4^{(2)}$.  Its McKay-Thompson series for $V_{D_4}^{(2)}$ is given by
\begin{align*}
T\left(2,\left(r^3s,rs\right);\tau\right)&=\sum_{1\leq i\leq 5}\chi_i^{(2)}\left(r^3s,rs\right)\mathcal{M}_i(\tau)=\chi_5^{(2)}\left(r^3s,rs\right)\mathcal{M}_5(\tau)\\
&=98304q+10747904q^2+432144384q^3+O\left(q^4\right).
\end{align*}
The McKay-Thompson series of the corresponding pair $(-k,k)\in Q_8^{(2)}$ for $V_{Q_8}^{(2)}$ is given by
\begin{align*}
T\left(2,(-k,k);\tau\right)&=\sum_{1\leq i\leq 5}\chi_i^{(2)}(-k,k)\mathcal{M}_i(\tau)=\chi_5^{(2)}(-k,k)\mathcal{M}_5(\tau)\\
&=-98304q-10747904q^2-432144384q^3+O\left(q^4\right).
\end{align*}
Therefore, width 2 weak moonshine distinguishes $D_4$ and $Q_8$.

\vspace{.4cm}

\end{document}